\newtheorem{theorem}{Theorem}
\newtheorem{corollary}[theorem]{Corollary}
\newtheorem{proposition}[theorem]{Proposition}
\newcommand{\C}{\mathbf{C}}
\newcommand{\R}{\mathbf{R}}
\begin{document}
\title[]{On Fermat Diophantine functional equations \\ and little Picard theorem}

\author[]{Jingbo Liu, Qi Han, and Wei Chen}

\address{\footnotesize{Department of Mathematics, The University of Hong Kong, Hong Kong, P.R. China
\vskip 2pt Department of Mathematics, Texas A\&M University, San Antonio, Texas 78224, USA
\vskip 2pt Department of Mathematics, Shandong University, Jinan, Shandong 250100, P.R. China
\vskip 2pt Email: {\sf jliu02@hku.hk} (Jingbo Liu) {\sf qhan@tamusa.edu} (Qi Han) {\sf weichensdu@126.com} (Wei Chen)}}

\thanks{{\sf 2010 Mathematics Subject Classification.} 30D30, 32A20, 39B32, 11D99.}

\thanks{{\sf Keywords.} Fermat Diophantine functional equations, little Picard theorem, Weierstrass $\wp$-functions.}

\begin{abstract}
We discuss equivalence conditions on the non-existence of non-trivial meromorphic solution to the Fermat Diophantine equation $f^m(z)+g^n(z)=1$ with integers $m,n\geq2$, from which other approaches to prove little Picard theorem are described.
\end{abstract}

\maketitle

%%%%%%%%%%%%%%%%%%%%%%%%%%%%%%%%%%%%%%%%%%%%%%%%%%%%%%%%%%%%%%%%%%%%%%%%%%%%%%%%%%%%%%%%%%%%%%%%%%%%%%%%%%%%%%%%%%%%%%%%%%%%%%%%%%%%%%%%%%%%%%%%%%%%%%%%%%%%%%%%

%%%%%%%%%%%%%%%%%%%%%%%%%%%%%%%%%%%%%%%%%%%%%%%%%%%%%%%%%%%%%%%%%%%%%%%%%%%%%%%%%%%%%%%%%%%%%%%%%%%%%%%%%%%%%%%%%%%%%%%%%%%%%%%%%%%%%%%%%%%%%%%%%%%%%%%%%%%%%%%%

%%%%%%%%%%%%%%%%%%%%%%%%%%%%%%%%%%%%%%%%%%%%%%%%%%%%%%%%%%%%%%%%%%%%%%%%%%%%%%%%%%%%%%%%%%%%%%%%%%%%%%%%%%%%%%%%%%%%%%%%%%%%%%%%%%%%%%%%%%%%%%%%%%%%%%%%%%%%%%%%

%%%%%%%%%%%%%%%%%%%%%%%%%%%%%%%%%%%%%%%%%%%%%%%%%%%%%%%%%%%%%%%%%%%%%%%%%%%%%%%%%%%%%%%%%%%%%%%%%%%%%%%%%%%%%%%%%%%%%%%%%%%%%%%%%%%%%%%%%%%%%%%%%%%%%%%%%%%%%%%%

This paper is primarily devoted to the description on equivalence conditions concerning the non-existence of non-constant meromorphic solution to the equation
\begin{equation}\label{Eq1}
f^m(z)+g^n(z)=1
\end{equation}
over the complex plane $\C$, where $m,n\geq2$ are positive integers.

\vskip 2pt
It seems to us that Montel first studied the functional analog \eqref{Eq1} to the Fermat Diophantine equation $x^p+y^p=1$; see Jategaonkar \cite{Ja} for an elementary proof written in English.
Later work has been discussed in Gross \cite{Gr} and Baker \cite{Ba}, where full characterization of non-constant meromorphic solutions to \eqref{Eq1} are provided when $m=n\geq2$.
In general, \eqref{Eq1} has no non-trivial entire solution provided $m+n<mn$ that follows from a theorem of Cartan \cite[section 4]{GH}; see also Toda \cite{To}.
For meromorphic solutions, it seems to us that this is due to Li \cite[section 4]{Li2}.
For convenience of the reader, we summarize those renown results below.

\begin{proposition}\label{Pr1}
The functional equation \eqref{Eq1} has non-trivial meromorphic solutions $f$ and $g$ over $\C^k$ for $k\geq1$ if and only if when \\
{\bf(I)} $m=n=2$, and $\displaystyle f=\frac{1-\alpha^2}{1+\alpha^2}$ and $\displaystyle g=\frac{2\hspace{0.2mm}\alpha}{1+\alpha^2}$, for a non-constant meromorphic function $\alpha$, are the only solutions; \\
{\bf(II)} $m=n=3$, and $\displaystyle f=\frac{3+\sqrt{3}\hspace{0.2mm}\wp'(\beta)}{6\hspace{0.2mm}\wp(\beta)}$ and
$\displaystyle g=\eta\hspace{0.2mm}\frac{3-\sqrt{3}\hspace{0.2mm}\wp'(\beta)}{6\hspace{0.2mm}\wp(\beta)}$, for some non-constant entire function $\beta$, are the only solutions, where $\eta^3=1$ and $\wp$ denotes the Weierstrass $\wp$-function that satisfies $(\wp')^2\equiv4\hspace{0.2mm}\wp^3-1$ after appropriately choosing its periods; \\
{\bf(III)} $m=2$ and $n=3$, and $\displaystyle f=\mathrm{i}\hspace{0.2mm}\wp'(\beta)$ and $\displaystyle g=\eta\sqrt[3]{4}\hspace{0.2mm}\wp(\beta)$, for some non-constant entire function $\beta$, generate a pair of solutions with $\wp$ satisfying $(\wp')^2\equiv4\hspace{0.2mm}\wp^3-1$; \\
{\bf(IV)} $m=2$ and $n=4$, and
$\displaystyle f_1=\frac{-4\hspace{0.2mm}\wp^3(\beta)+\frac{1}{12}\hspace{0.2mm}\wp(\beta)+\frac{1}{3}}{4\hspace{0.2mm}\wp^3(\beta)+\frac{1}{12}\hspace{0.2mm}\wp(\beta)+\frac{1}{6}}$
and $\displaystyle g_1=2\hspace{0.2mm}\zeta\hspace{0.2mm}\frac{\wp(\beta)}{\wp'(\beta)}$ as well as
$\displaystyle f_2=\mathrm{i}\hspace{0.2mm}\frac{4\hspace{0.2mm}\wp^3(\beta)-\frac{1}{12}\hspace{0.2mm}\wp(\beta)-\frac{1}{3}}{4\hspace{0.2mm}\wp^2(\beta)}$
and $\displaystyle g_2=\zeta\hspace{0.2mm}\frac{\mathrm{i}\hspace{0.2mm}\wp'(\beta)}{2\hspace{0.2mm}\wp(\beta)}$, for some non-constant entire function $\beta$, generate pairs of solutions with $\zeta^4=1$ and $\wp$ satisfying $(\wp')^2\equiv4\hspace{0.2mm}\wp^3+\frac{1}{12}\wp+\frac{1}{6}$; \\
{\bf(V)} $m=3$ and $n=2$, and this follows from {\bf (III)} by symmetry; \\
{\bf(VI)} $m=4$ and $n=2$, and this follows from {\bf (IV)} by symmetry.
\end{proposition}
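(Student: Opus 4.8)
\emph{Proposal.} The plan is to recast \eqref{Eq1} geometrically. A pair of non-constant meromorphic functions $(f,g)$ on $\C$ with $f^m+g^n=1$ is precisely a non-constant holomorphic map $\Phi=(f,g)\colon\C\to C_{m,n}$ into the smooth projective model $C_{m,n}$ of the affine curve $\{x^m+y^n=1\}$; the extension of $\Phi$ across the poles of $f$ and $g$ is automatic because $C_{m,n}$ is projective and $\C$ is smooth, and conversely pulling back the coordinate functions $x,y$ along any such map returns a solution of \eqref{Eq1}. Hence everything is controlled by which compact Riemann surfaces $C_{m,n}$ receive a non-constant image of $\C$. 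By uniformization this is a clean dichotomy: if $g(C_{m,n})\geq2$ its universal cover is the unit disk $\mathbb{D}$, so every holomorphic $\C\to C_{m,n}$ lifts to a bounded entire map and is constant by Liouville; whereas for $g(C_{m,n})\in\{0,1\}$ the universal cover is $\mathbb{P}^1$ or $\C$ and non-constant maps exist (this coarsely matches the Cartan and Li bounds recalled above, and avoids any circular appeal to little Picard).

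First I would compute the genus. The standard formula for the smooth model of $x^m+y^n=1$ gives $g(C_{m,n})=\tfrac12\big[(m-1)(n-1)-\gcd(m,n)+1\big]$. A direct check shows $g=0$ only for $(m,n)=(2,2)$, and $g=1$ exactly for $(m,n)\in\{(3,3),(2,3),(3,2),(2,4),(4,2)\}$, while every remaining pair with $m,n\geq2$ yields $g\geq2$; this reproduces the listing {\bf(I)}--{\bf(VI)} and simultaneously disposes of all other $(m,n)$.

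Next I would exhibit the parametrizations. In the genus-zero case {\bf(I)} the conic $x^2+y^2=1$ is rational, and stereographic projection gives $x=\tfrac{1-\alpha^2}{1+\alpha^2},\ y=\tfrac{2\alpha}{1+\alpha^2}$; completeness follows from the inverse substitution $\alpha=\tfrac{g}{1+f}$, a short identity using $f^2=(1-g)(1+g)$ that recovers $f,g$ from $\alpha$ and shows these are the only solutions. In each genus-one case the curve is elliptic, so after fixing a Weierstrass normal form---$(\wp')^2\equiv4\wp^3-1$ (i.e. $g_2=0,\ g_3=1$) for {\bf(II)},{\bf(III)} and $(\wp')^2\equiv4\wp^3+\tfrac1{12}\wp+\tfrac16$ for {\bf(IV)}---one produces an isomorphism $C_{m,n}\cong\C/\Lambda$ and reads off $f,g$ as the displayed rational expressions in $\wp(\beta),\wp'(\beta)$, verifying $f^m+g^n=1$ by substituting the defining differential equation. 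Since any holomorphic map $\C\to\C/\Lambda$ lifts through the universal cover to an entire function $\beta$, composing with the uniformizing $\wp$ shows, when $m=n$, that the listed pairs exhaust all solutions; cases {\bf(V)},{\bf(VI)} then follow by interchanging $f$ and $g$.

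The hard part will be the completeness assertion in {\bf(II)}: it requires pinning down the precise isomorphism between the Fermat cubic and the specific $\wp$-curve with $g_2=0,\ g_3=1$, together with the third root of unity $\eta$ (which enters because $(\eta g)^3=g^3$ leaves \eqref{Eq1} invariant) that accounts for the automorphisms of the cubic. The analogous bookkeeping of the fourth roots $\zeta$ and of the two natural coordinate charts in {\bf(IV)} is exactly why two parametrizing pairs $(f_1,g_1)$ and $(f_2,g_2)$ appear there rather than one. The existence half and the genus tabulation are comparatively routine; the recurring delicate point is to guarantee that $\Phi\colon\C\to C_{m,n}$ is non-constant, which is where the non-triviality of $(f,g)$ is genuinely used.
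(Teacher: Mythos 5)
The paper does not actually prove Proposition \ref{Pr1}; it presents it as a summary of known results, attributing (I)--(II) to Gross, Baker and (for the ``only solutions'' claims) Coman--Poletsky, the non-existence for all remaining $(m,n)$ to Cartan's theorem and to Li, and (IV) to Huber's explicit computations. Your uniformization argument is therefore a genuinely different and essentially self-contained route. The genus formula $g(C_{m,n})=\tfrac12\bigl[(m-1)(n-1)+1-\gcd(m,n)\bigr]$ is correct (Riemann--Hurwitz for the degree-$n$ cover $y^n=1-x^m$ of the $x$-line), and your tabulation --- $g=0$ only for $(2,2)$, $g=1$ exactly for $(3,3),(2,3),(3,2),(2,4),(4,2)$, and $g\geq2$ otherwise --- does reproduce the list (I)--(VI) and disposes of all other exponents in one stroke; the lift-to-the-disk-plus-Liouville step replaces Cartan's value-distribution theorem and, as you observe, involves no circular appeal to Picard. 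The rational parametrization in (I) with the inverse substitution $\alpha=g/(1+f)$ is complete as you describe it. What the paper's citations supply, and what your sketch defers, is exactly what you flag as the hard part: the explicit isomorphism of the Fermat cubic (resp.\ the $(2,4)$-curve) with the particular Weierstrass normal form, the bookkeeping of $\eta$ and $\zeta$, and the verification of the displayed formulas in (IV) --- the paper itself warns that the published version in Li's survey is wrong, so this verification is not a formality.

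One genuine gap remains: the proposition is stated over $\C^k$ for every $k\geq1$, whereas your map $\Phi=(f,g)\colon\C\to C_{m,n}$ and the removable-singularity argument at the poles only treat $k=1$. For $k\geq2$ the pair $(f,g)$ need not define a holomorphic map into the projective curve because of indeterminacy sets of codimension two, so the hyperbolic case must be handled by restricting to a generic complex line through a point where $f$ is non-degenerate, and the completeness assertions in (I)--(II) require lifting a map defined off the indeterminacy locus through $\C\to\C/\Lambda$ and then extending the lifted $\beta$ across that locus; this several-variables extension step is precisely the content of the Coman--Poletsky theorem that the paper invokes, and it should be stated rather than absorbed into the one-variable picture.
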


{\sl Remark.}
$f=\sin(\beta)$ and $g=\cos(\beta)$ in {\bf(I)}, through $\alpha=\tan\bigl(\frac{\beta}{2}\bigr)$ with a non-constant entire function $\beta$, are the only entire solutions to the functional equation \eqref{Eq1}.
Moreover, in {\bf(II)}, the constant ``$-1$'' is not essential - we simply need to attain ``$g_2=0$'' in which case all Weierstrass $\wp$-functions (viewed as tori over $\C$) are isomorphic.
The uniqueness of meromorphic solutions to \eqref{Eq1} in {\bf(I)} and {\bf(II)} follows from Coman and Poletsky \cite[theorem 5.2]{CP} that further says Han \cite{Ha} indeed provided full characterization of meromorphic solutions to the PDE $u^m_{z_1}+u^m_{z_2}=u^m$ (recall that \cite{Ha} was written before \cite{CP}).
The results in {\bf (IV)}, and {\bf (VI)}, follow from Huber \cite[equations (4) and (7)-(7.3)]{Hu} by setting $b_0=1$, $b_1=b_2=b_3=0$ and $b_4=-1$ - the result in Li \cite[page 217]{Li2} (citing \cite{Hu}) by taking $f_1=\wp'$ is unfortunately not correct.

\vskip 2pt
It is thus natural to seek equivalence conditions concerning the non-existence of non-trivial meromorphic solution to \eqref{Eq1}, and we will consider this question in this paper.

\vskip 2pt
Our first elementary result makes use of the condition $\mathcal{Z}(f)=\mathcal{Z}(g)$ ignoring multiplicity, where $\mathcal{Z}(h)$ represents the zero set of the function $h$.
Actually, we observe
\begin{equation}\label{Eq2}
f^m(z)-1=-\hspace{0.2mm}g^n(z) \hspace{2mm} \mathrm{and} \hspace{2mm} g^n(z)-1=-\hspace{0.2mm}f^m(z).
\end{equation}
Let $\mu_0,\mu_1,\ldots,\mu_{m-1}$ and $\nu_0,\nu_1,\ldots,\nu_{n-1}$ be respectively the distinct $m$-th and $n$-th roots of unity.
Then, this condition implies $f$ omits $0,\mu_0,\mu_1,\ldots,\mu_{m-1}$ and $g$ omits $0,\nu_0,\nu_1,\ldots,\nu_{n-1}$, so that little Picard theorem may be applied to show $f$ and $g$ are constant.

\begin{theorem}\label{Th1}
Meromorphic solutions $f$ and $g$ to the functional equation \eqref{Eq1} in $\C^k$ for $k\geq1$ are constant if and only if $\mathcal{Z}(f)=\mathcal{Z}(g)$ ignoring multiplicity when $\max\left\{m,n\right\}=2$, $\mathcal{Z}(f)\subseteq\mathcal{Z}(g)$ ignoring multiplicity when $n\geq3$, and $\mathcal{Z}(f)\supseteq\mathcal{Z}(g)$ ignoring multiplicity when $m\geq3$.
\end{theorem}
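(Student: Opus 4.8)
The plan is to reach the little Picard theorem by showing that one of $f,g$ omits at least three distinct values, the whole argument being driven by reading the identities \eqref{Eq2} pointwise. If $z_0$ is a zero of $f$, then $f^m(z_0)=0$ forces $g^n(z_0)=1$, so $g(z_0)$ is one of the $n$-th roots of unity $\nu_0,\dots,\nu_{n-1}$ and in particular $g(z_0)\neq0$; symmetrically, a zero of $g$ forces $f(z_0)$ to be one of the $m$-th roots of unity $\mu_0,\dots,\mu_{m-1}$, whence $f(z_0)\neq0$. No poles interfere with this, since at a zero of $f$ the right-hand side $1-f^m$ is finite, so $g$ is finite there as well, and likewise with the roles of $f$ and $g$ interchanged.

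First I would treat the regime $n\geq3$ under the hypothesis $\mathcal{Z}(f)\subseteq\mathcal{Z}(g)$. If $z_0\in\mathcal{Z}(f)$ then, by the observation above, $g(z_0)$ is an $n$-th root of unity and so $g(z_0)\neq0$; but $z_0\in\mathcal{Z}(f)\subseteq\mathcal{Z}(g)$ gives $g(z_0)=0$, a contradiction. Hence $\mathcal{Z}(f)=\emptyset$, i.e.\ $f$ omits $0$. Consequently $g$ cannot take any value $\nu_k$: were $g(z_1)=\nu_k$ we would get $f^m(z_1)=0$ and thus $f(z_1)=0$, which is impossible. Therefore $g$ omits the $n\geq3$ distinct values $\nu_0,\dots,\nu_{n-1}$, so the little Picard theorem forces $g$ to be constant; then $f^m=1-g^n$ is constant, and a meromorphic function whose $m$-th power is constant is itself constant. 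The regime $m\geq3$ under $\mathcal{Z}(f)\supseteq\mathcal{Z}(g)$ is identical after interchanging $(f,m,\mu)$ with $(g,n,\nu)$.

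Next I would handle the remaining regime $\max\{m,n\}=2$, i.e.\ $m=n=2$, under $\mathcal{Z}(f)=\mathcal{Z}(g)$. Here each function omits only the \emph{two} roots of unity $\pm1$, so to reach the Picard threshold of three omitted values one needs the value $0$ to be omitted by \emph{both} functions, and this is exactly what the two inclusions packaged in $\mathcal{Z}(f)=\mathcal{Z}(g)$ deliver: $\mathcal{Z}(f)\subseteq\mathcal{Z}(g)$ forces $\mathcal{Z}(f)=\emptyset$ and hence $g$ omits $\pm1$, while $\mathcal{Z}(g)\subseteq\mathcal{Z}(f)$ forces $\mathcal{Z}(g)=\emptyset$ and hence $g$ also omits $0$. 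Thus $g$ omits $\{0,1,-1\}$, and little Picard makes $g$, and then $f$, constant. This also explains the trichotomy in the statement: the larger the exponent, the more roots of unity are forced to be omitted, so a weaker one-sided inclusion suffices, and only at $m=n=2$ must one impose equality. Conversely, if $f$ and $g$ are non-vanishing constants then $\mathcal{Z}(f)=\mathcal{Z}(g)=\emptyset$ and each of the relations $=$, $\subseteq$, $\supseteq$ holds trivially.

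Two technical points remain. For $\C^k$ with $k\geq2$ the little Picard theorem is applied along the complex lines through a fixed base point: each restriction is a one-variable meromorphic function omitting the same values, hence constant, and since this holds on every such line the functions are constant on $\C^k$. The step I expect to be the main obstacle is the bookkeeping of omitted values: the pointwise implication ``$f$ equals a root of unity $\Rightarrow$ $g$ vanishes there'' only becomes a genuine omitted-value statement after the inclusion hypothesis has been used to empty the zero set of the other function, and making this count land on exactly three is precisely what forces the choice of $\subseteq$, $\supseteq$, or $=$ in each of the three regimes.
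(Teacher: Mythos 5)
Your argument is correct and is essentially the paper's own: the authors likewise read the identities \eqref{Eq2} pointwise to conclude that the zero-set inclusions force $f$ to omit $0,\mu_0,\ldots,\mu_{m-1}$ and $g$ to omit $0,\nu_0,\ldots,\nu_{n-1}$, and then invoke the little Picard theorem. Your bookkeeping of which inclusion empties which zero set, and hence why equality is needed only when $m=n=2$, matches the intended proof exactly.
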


This result in turn implies little Picard theorem.
In fact, let $H$ be a meromorphic function omits $3$ distinct values; without loss of generality, assume the omitted values are $0,1,\infty$.
Thus, $H=e^{\gamma}$ and $1-H=e^{\delta}$.
So, $f:=e^{\frac{\gamma}{m}}$ and $g:=e^{\frac{\delta}{n}}$ satisfy \eqref{Eq1} with $\mathcal{Z}(f)=\mathcal{Z}(g)=\emptyset$.
Hence, $f$ is constant and so is $H$ - that is, little Picard theorem follows from theorem \ref{Th1}.
Also, notice that $f=\frac{1}{1+e^z}$ and $g=\frac{e^z}{1+e^z}$ with $\mathcal{Z}(f)=\mathcal{Z}(g)=\emptyset$ satisfy \eqref{Eq1} for $m=n=1$.

\vskip 2pt
Li \cite{Li1,LY,Li3} exploited the condition $\mathcal{Z}(f_{z_1})=\mathcal{Z}(g_{z_2})$ counting multiplicity and completely answered this question in $\C^2$ when $m,n\geq2$.
He then used those results to study meromorphic solutions to Fermat-type PDEs; see also Han \cite{Ha} and the references therein.

\vskip 2pt
This condition $\mathcal{Z}(f')=\mathcal{Z}(g')$ counting multiplicity can also be applied to answer our question over $\C$ easily when $m,n\geq2$.
Actually, one realizes by differentiation
\begin{equation}\label{Eq3}
mf^{m-1}f'=-\hspace{0.2mm}n\hspace{0.2mm}g^{n-1}g'.
\end{equation}
When $f(z_1)=\mu_j$, it has multiplicity $n\ell$ with $\ell\geq1$ and hence $f'(z_1)=0$, which cannot be true from \eqref{Eq3}.
So, $f$ omits $\mu_j$ and $g$ in turn omits $0$.
When $m\geq3$, little Picard theorem says $f$ is constant; otherwise, consider $g$ and $\nu_l$ to see $f$ omits $0$ - again, $f$ is constant.

\vskip 2pt
Recall (\cite[remark 2.2]{Li5}) $f=\frac{1-e^{2z}}{1+e^{2z}}$ and
$g=\frac{2\hspace{0.1mm}e^z}{1+e^{2z}}$ satisfy \eqref{Eq1} with $\mathcal{Z}(f')\varsubsetneq\mathcal{Z}(g')$ counting multiplicity for $m=n=2$.
On the other hand, note $f=e^z$ and $g=1-e^{mz}$ satisfy \eqref{Eq1} with $\mathcal{Z}(f')=\mathcal{Z}(g')=\emptyset$ for $m\geq1$ and $n=1$; the same result follows by symmetry for $m=1$ and $n\geq1$.
Therefore, all these preceding observations yield the following result.

\begin{theorem}\label{Th2}
Meromorphic solutions $f$ and $g$ to the functional equation \eqref{Eq1} in $\C$ are constant if and only if $\mathcal{Z}(f')=\mathcal{Z}(g')$ counting multiplicity when $m,n\geq2$.
\end{theorem}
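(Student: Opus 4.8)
The plan is to establish the two implications separately, with essentially all the content in the forward direction. The converse is immediate: if $f$ and $g$ are constant, then $f'\equiv g'\equiv0$, so both derivatives vanish identically and the condition $\mathcal{Z}(f')=\mathcal{Z}(g')$ holds trivially (both zero sets being all of $\C$). So I would spend the effort reorganizing the observations made just above the statement into a clean proof of the implication ``$\mathcal{Z}(f')=\mathcal{Z}(g')$ counting multiplicity $\Rightarrow f,g$ constant''.

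Assuming $\mathcal{Z}(f')=\mathcal{Z}(g')$ counting multiplicity, I would first differentiate \eqref{Eq1} to obtain \eqref{Eq3}, $mf^{m-1}f'=-ng^{n-1}g'$. The central step is to show $f$ omits every $m$-th root of unity $\mu_j$. Suppose instead $f(z_1)=\mu_j$ at some $z_1$. Then \eqref{Eq1} forces $g^n(z_1)=0$, hence $g(z_1)=0$; let $k\geq1$ denote the multiplicity of this zero. Writing $x^m-1=\prod_{i=0}^{m-1}(x-\mu_i)$ with the $\mu_i$ pairwise distinct, the relation $f^m-1=-g^n$ shows that near $z_1$ only the factor $f-\mu_j$ vanishes, so $f-\mu_j$ has a zero of multiplicity exactly $nk$ there. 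Consequently $f'$ vanishes to order $nk-1$ while $g'$ vanishes to order $k-1$ at $z_1$; since $n\geq2$ and $k\geq1$ give $nk-1>k-1$, these orders differ, contradicting $\mathcal{Z}(f')=\mathcal{Z}(g')$ counting multiplicity. Thus $f$ omits $\mu_0,\ldots,\mu_{m-1}$, and then \eqref{Eq1} shows $g$ omits $0$.

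To finish I would split on the size of $m$. If $m\geq3$, then $f$ already omits the $m\geq3$ distinct finite values $\mu_0,\ldots,\mu_{m-1}$, so little Picard theorem forces $f$ constant, whence $g^n=1-f^m$ is constant and $g$ is constant too. If $m=2$, then $f$ omits only $\{1,-1\}$, which is not yet enough; here I would run the same argument symmetrically in $g$, using $n\geq2$, to conclude that $g$ omits every $n$-th root of unity $\nu_l$ and hence that $f$ omits $0$. Then $f$ omits the three distinct values $\{0,1,-1\}$, and little Picard theorem again makes $f$, and then $g$, constant.

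The delicate point is the multiplicity bookkeeping, and in particular recognizing the true source of the contradiction: equation \eqref{Eq3} is an identity and is perfectly consistent at $z_1$ (both sides vanish to order $nk-1$), so it alone yields nothing. The contradiction arises only upon comparing the orders of $f'$ and $g'$ at $z_1$ against the hypothesis $\mathcal{Z}(f')=\mathcal{Z}(g')$ counting multiplicity, and the factorization of $x^m-1$ into distinct linear factors is exactly what pins the order of $f-\mu_j$ at $nk$, and hence the order of $f'$ at $nk-1$. I expect the examples recalled before the statement, namely $f=\frac{1-e^{2z}}{1+e^{2z}}$, $g=\frac{2\hspace{0.1mm}e^z}{1+e^{2z}}$ with $\mathcal{Z}(f')\varsubsetneq\mathcal{Z}(g')$ for $m=n=2$, and the $n=1$ solutions with $\mathcal{Z}(f')=\mathcal{Z}(g')=\emptyset$, to confirm that both the counting-multiplicity hypothesis and the restriction $m,n\geq2$ are genuinely needed.
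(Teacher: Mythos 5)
Your proof is correct and follows essentially the same route as the paper: differentiating to get \eqref{Eq3}, comparing orders at a point where $f=\mu_j$ (order $nk-1$ for $f'$ versus $k-1$ for $g'$) to force $f$ to omit the $m$-th roots of unity and $g$ to omit $0$, and then splitting on $m\geq3$ versus $m=2$ before invoking little Picard theorem. Your observation that the contradiction really comes from testing these two multiplicities against the hypothesis $\mathcal{Z}(f')=\mathcal{Z}(g')$ counting multiplicity, rather than from \eqref{Eq3} alone (whose two sides vanish to the same order $nk-1$ and are therefore consistent), is a fair sharpening of the paper's terser phrasing, which attributes the contradiction to \eqref{Eq3}.
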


A refinement of theorem \ref{Th2} is formulated as the following result.

\begin{theorem}\label{Th3}
Meromorphic solutions $f$ and $g$ to the functional equation \eqref{Eq1} in $\C$ are constant if and only if $\mathcal{Z}(f')=\mathcal{Z}(g')$ counting multiplicity when $m=n=2$, whereas $\mathcal{Z}(f')\subseteq\mathcal{Z}(g')$ or $\mathcal{Z}(f')\supseteq\mathcal{Z}(g')$ ignoring multiplicity when $m,n\geq2$ yet $\left(m,n\right)\neq\left(2,2\right)$.
\end{theorem}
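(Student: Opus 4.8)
The plan is to prove the biconditional by disposing of the trivial direction and the diagonal case first, then reducing the set containment to a ramification statement, and finally closing with a counting argument. If $f,g$ are constant then $f'\equiv g'\equiv0$, so $\mathcal{Z}(f')=\mathcal{Z}(g')=\C$ with or without multiplicities and every containment listed holds; this settles one direction for all $(m,n)$. For the converse with $m=n=2$ the hypothesis is exactly $\mathcal{Z}(f')=\mathcal{Z}(g')$ counting multiplicity, so constancy is immediate from Theorem \ref{Th2}. For $(m,n)\neq(2,2)$ I may, after exchanging the roles of $(f,m)$ and $(g,n)$ (which turns $\supseteq$ into $\subseteq$ and preserves \eqref{Eq1}), assume $\mathcal{Z}(f')\subseteq\mathcal{Z}(g')$ ignoring multiplicity, and it suffices to force $f$ constant, since then $g^n=1-f^m$ is constant and so is $g$.

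Next I would translate the containment into multiplicities at the roots of unity. Suppose $g(z_1)=0$ with multiplicity $q\geq1$; then $f^m(z_1)=1$, so $f(z_1)=\mu_j$ for some $j$, and since $g^n=1-f^m=-\prod_{j}(f-\mu_j)$ vanishes to order $nq$ at $z_1$ while only the factor $f-\mu_j$ is zero there, $f-\mu_j$ has multiplicity $nq$. Hence $f'$ vanishes at $z_1$ to order $nq-1\geq n-1\geq1$, so $z_1\in\mathcal{Z}(f')\subseteq\mathcal{Z}(g')$ forces $g'(z_1)=0$ and therefore $q\geq2$. Thus every zero of $g$ has multiplicity $\geq2$, equivalently \emph{every} $\mu_j$-value of $f$ is attained with multiplicity $\geq2n$. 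I would also record two facts valid for any solution: each $\mu_j$-value of $f$ has multiplicity divisible by $n$ (the same factorisation), and if $z_0$ is a pole of $f$ of order $a$ it is a pole of $g$ of order $b$ with $ma=nb$ by balancing the growth in \eqref{Eq1}, so $a$ is divisible by $n/\gcd(m,n)$.

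Finally I would feed these data into Nevanlinna's second main theorem for $f$ against the $m+1$ targets $\mu_0,\dots,\mu_{m-1},\infty$. Writing $d=\gcd(m,n)$, the multiplicity bounds give $\overline{N}\bigl(r,1/(f-\mu_j)\bigr)\leq\frac{1}{2n}T(r,f)+O(1)$ and $\overline{N}(r,f)\leq\frac{d}{n}T(r,f)+O(1)$, so that
\[
(m-1)\,T(r,f)\ \leq\ \Bigl(\tfrac{m}{2n}+\tfrac{d}{n}\Bigr)T(r,f)+S(r,f).
\]
The crux is the purely arithmetic inequality $\frac{m}{2n}+\frac{d}{n}<m-1$, that is $2mn-2n-m-2\gcd(m,n)>0$, which I would verify for all $m,n\geq2$ with $(m,n)\neq(2,2)$; the left side equals $-2$ precisely at $(2,2)$, which explains why that case is genuinely excluded, and the pole term is indispensable when $m=2$. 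Granting the inequality, $T(r,f)=S(r,f)$ forces $f$, and hence $g$, constant. I expect this uniform inequality, together with the justification of the pole-multiplicity bound, to be the main obstacle. A self-contained alternative avoiding the second main theorem is to invoke Proposition \ref{Pr1}: when $(m,n)\neq(2,2)$ is not among the solvable pairs there are no non-constant solutions at all, while for the Weierstrass families {\bf(II)}--{\bf(VI)} one checks directly that $g$ possesses a simple zero (so $q=1$), contradicting the multiplicity conclusion of the previous paragraph.
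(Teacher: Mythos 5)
Your argument is correct, and it takes a genuinely different route from the paper's. Both proofs start the same way (the trivial direction, the reduction of $m=n=2$ to theorem \ref{Th2}, the symmetry reduction to $\mathcal{Z}(f')\subseteq\mathcal{Z}(g')$) and both extract the same key local fact: the hypothesis forces every zero of $g$ to have multiplicity $p\geq2$, equivalently every $\mu_j$-point of $f$ to have multiplicity $np\geq2n$. From there the paths diverge. The paper implicitly restricts to the only pairs $(m,n)\neq(2,2)$ that can carry non-constant solutions, namely $(2,3)$, $(2,4)$, $(3,3)$ and their transposes, builds the auxiliary function $H_0=f'(g')^2/\{(f^m-1)(g^n-1)\}$, verifies case by case (including a pole analysis using $ms=nt$) that $H_0$ is entire with $T(r,H_0)=S(r)$ by the logarithmic derivative lemma, and then squeezes out $T(r,g)=S(r)$ from $m(r,1/g)$ and $N(r,1/g)\leq N(r,1/H_0^{2})+O(1)$. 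You instead convert the same multiplicity data, together with the divisibility of pole orders of $f$ by $n/\gcd(m,n)$, into the truncated bounds $\overline{N}\bigl(r,1/(f-\mu_j)\bigr)\leq\frac{1}{2n}T(r,f)+O(1)$ and $\overline{N}(r,f)\leq\frac{\gcd(m,n)}{n}T(r,f)$ and feed them into the second main theorem for the $m+1$ targets $\mu_0,\dots,\mu_{m-1},\infty$; the governing inequality $2mn-2n-m-2\gcd(m,n)>0$ does hold for all $m,n\geq2$ with $(m,n)\neq(2,2)$ (the cases $m=2$, $n=2$, and $m,n\geq3$ check out separately) and fails exactly at $(2,2)$, as you note. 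What your route buys is uniformity: no appeal to the classification of solvable pairs and no case-by-case pole computation, at the cost of invoking the full second main theorem rather than only the lemma on the logarithmic derivative. One caveat: your closing ``self-contained alternative'' via proposition \ref{Pr1} is shakier than your main argument, since that proposition claims completeness of the listed solutions only in cases {\bf(I)} and {\bf(II)}, while {\bf(III)}--{\bf(VI)} are merely said to \emph{generate} solutions; checking that those particular $g$'s have a simple zero would not rule out other solutions, so the second-main-theorem route should be regarded as the proof.
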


This result is motivated by Li \cite[theorem 2.1]{Li5} where entire solutions to \eqref{Eq1} are considered when $m=n=2$, but the idea of our proof follows from Li \cite[theorem 1.1]{Li3} where Nevanlinna theory is used.
It is noteworthy that neither this problem nor the solution to it should depend on Nevanlinna theory, as seen from our descriptions regarding theorems \ref{Th1} and \ref{Th2}.

\vskip 2pt
This result implies little Picard theorem as well.
Actually, let $H$ be a meromorphic function omitting $0,1,\infty$.
Then, $H=e^{\gamma}$ and $1-H=e^{\delta}$, and $f:=e^{\frac{\gamma}{m}}$ and $g:=e^{\frac{\delta}{n}}$ satisfy \eqref{Eq1}.
In view of \eqref{Eq3}, $\mathcal{Z}(f')=\mathcal{Z}(g')$ counting multiplicity trivially (since $\mathcal{Z}(f)=\mathcal{Z}(g)=\emptyset$).
Via theorem \ref{Th3}, $f$ is constant and so is $H$ - that is, little Picard theorem follows from theorem \ref{Th3}.

\vskip 2pt
Picard's results are far-reaching even nowadays, and there is an extensive literature closely related to them.
Confining our attention only to little Picard theorem over $\C$, one finds many interesting results; just name a few recent ones, \cite{Be,Le,Li4,Li5,Sc,Si,Sk,Zh} etc.

\vskip 2pt
Below, we assume the familiarity with the basics of Nevanlinna theory \cite{Ne} of meromorphic functions in $\C$ such as the first main theorem and the logarithmic derivative lemma, and the standard notations such as the characteristic function $T(r,f)$, the proximity function $m(r,f)$ and the counting function $N(r,f)$.
$S(r,f)$ denotes any quantity satisfying $S(r,f)=o\left(T(r,f)\right)$ when $r\to\infty$, possibly outside of some set of $\R^+$ having finite Lebesgue measure.

\begin{proof}[Proof of Theorem \ref{Th3}]
The first case was discussed in theorem \ref{Th2}.
By symmetry, we only need to prove the other case if $\mathcal{Z}(f')\subseteq\mathcal{Z}(g')$ when $\left(m,n\right),\left(n,m\right)\in\left\{\left(2,3\right),\left(2,4\right),\left(3,3\right)\right\}$.

\vskip 2pt
For meromorphic solutions $f$ and $g$ to \eqref{Eq1}, write
\begin{equation}
H_0:=\frac{f'(g')^2}{(f^m-1)(g^n-1)}.\nonumber
\end{equation}
When $f(z_1)=\mu_j$ for $j=0,1,\ldots,m-1$, then $g(z_1)=0$ by \eqref{Eq2}.
If $g(z_1)=0$ has multiplicity $p$, $f(z_1)=\mu_j$ has multiplicity $np$, so that $f'(z_1)=0$ has multiplicity $np-1$.
Via our hypothesis, $g'(z_1)=0$ has multiplicity $p-1$ and $p\geq2$.
So, $H_0(z_1)=0$ has multiplicity $2p-3>0$.
Also, one immediately observes that $H^2_0(z_1)=0$ has multiplicity $4p-6\geq p$ - that is,
\begin{equation}\label{Eq4}
N\Bigl(r,\frac{1}{g}\Bigr)\leq N\Bigl(r,\frac{1}{H^2_0}\Bigr)+O(1).
\end{equation}
When $g(z^*_1)=\nu_l$ for $l=0,1,\ldots,n-1$, then $f(z^*_1)=0$ by \eqref{Eq2}.
If $f(z^*_1)=0$ has multiplicity $q$, $g(z^*_1)=\nu_l$ has multiplicity $mq$, so that $g'(z^*_1)=0$ has multiplicity $mq-1\geq2q-1$.
Therefore, $H_0(z^*_1)=0$ has multiplicity $mq-2\geq0$ - that is, $H_0$ is analytic at the point $z^*_1$.

\vskip 2pt
It is worth to notice that the above proof goes through whenever $m,n\geq2$.

\vskip 2pt
Next, when $f(z_{\infty})=g(z_{\infty})=\infty$ have multiplicities $s,t$ respectively, then $H_0(z_{\infty})=0$ has multiplicity $(m-1)\hspace{0.1mm}s+(n-2)\hspace{0.1mm}t-3\geq0$ - that is, $H_0$ is analytic at the point $z_{\infty}$.
In fact, note $ms=nt$.
If $\left(m,n\right)=\left(2,3\right)$, one has $s\geq3$ and $t\geq2$, so that $s+t\geq5$; if $\left(m,n\right)=\left(2,4\right)$, one has $s\geq2$ and $t\geq1$, so that $s+2\hspace{0.2mm}t\geq4$; if $\left(m,n\right)=\left(3,2\right)$, one has $s\geq2$ and $t\geq3$, so that $2\hspace{0.2mm}s\geq4$; if $\left(m,n\right)=\left(4,2\right)$, one has $s\geq1$ and $t\geq2$, so that $3\hspace{0.2mm}s\geq3$; finally, if $\left(m,n\right)=\left(3,3\right)$, one has $s=t\geq1$ and $2\hspace{0.2mm}s+t=3\hspace{0.2mm}s=3\hspace{0.2mm}t\geq3$.
Hence, $H_0$ cannot have a pole at $z_{\infty}$.

\vskip 2pt
All these preceding discussions lead to that $H_0$ is an entire function.
Using the logarithmic derivative lemma, it follows that
\begin{equation}\label{Eq5}
T(r,H_0)=m(r,H_0)+O(1)=S(r),
\end{equation}
where $S(r):=S(r,f)=S(r,g)$ because $m\hspace{0.2mm}T(r,f)=n\hspace{0.2mm}T(r,g)+O(1)$ follows from equation \eqref{Eq1}.
Through the logarithmic derivative lemma again, one has
\begin{equation}
m\Bigl(r,\frac{1}{g}\Bigr)\leq m\Bigl(r,\frac{H_0}{g}\Bigr)+m\Bigl(r,\frac{1}{H_0}\Bigr)+O(1)=m\Bigl(r,\frac{1}{H_0}\Bigr)+S(r),\nonumber
\end{equation}
which combined with \eqref{Eq4}, \eqref{Eq5} and the first main theorem yields $T(r,g)=S(r)$.
Thus, $g$ must be constant and so is $f$ - that is, \eqref{Eq1} cannot have non-trivial meromorphic solutions.
\end{proof}

Many interesting results on meromorphic solutions to differential equations can be found in Hille \cite{Hi}; see also \cite{HL,HY}.
Our discussions lead to a non-existence result of non-constant meromorphic solutions to certain non-linear differential equations which otherwise can be difficult to handle using those available methods from differential equations.

\begin{corollary}\label{Co1}
Given an entire function $h$, meromorphic solutions $f$ and $g$ in $\C$ to
\begin{equation}\label{Eq6}
f^m+h^n(f')^{\ell n}=1
\end{equation}
are constant provided that $\ell,n,m\geq2$ but $\left(m,n\right)\neq\left(2,2\right)$.
\end{corollary}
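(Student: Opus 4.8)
The plan is to recognize \eqref{Eq6} as a disguised instance of the Fermat equation \eqref{Eq1} and then invoke theorem \ref{Th3}. I may assume $f$ is non-constant, since otherwise there is nothing to prove. Setting
\begin{equation}
g:=h\cdot(f')^{\ell},\nonumber
\end{equation}
I observe that $g$ is meromorphic on $\C$, being the product of the entire function $h$ with the $\ell$-th power of the meromorphic function $f'$. Moreover $g^n=h^n(f')^{\ell n}$, so \eqref{Eq6} reads exactly $f^m+g^n=1$; that is, $(f,g)$ is a meromorphic solution pair to \eqref{Eq1} with $m,n\geq2$ and $\left(m,n\right)\neq\left(2,2\right)$.

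The key step is to verify the zero-set inclusion demanded by theorem \ref{Th3}. Differentiating the definition of $g$ yields
\begin{equation}
g'=(f')^{\ell-1}\bigl(h'f'+\ell\hspace{0.2mm}h\hspace{0.2mm}f''\bigr).\nonumber
\end{equation}
Because $\ell\geq2$, the factor $(f')^{\ell-1}$ carries at least a simple zero at every zero of $f'$; at such a point $f'$ and $f''$ are holomorphic and $h,h'$ are entire, so the remaining factor stays finite. Hence every zero of $f'$ is a zero of $g'$, which is precisely $\mathcal{Z}(f')\subseteq\mathcal{Z}(g')$ ignoring multiplicity. This is exactly one of the two inclusions permitted in theorem \ref{Th3} for the regime $\left(m,n\right)\neq\left(2,2\right)$, and it is noteworthy that the corollary's hypothesis $\left(m,n\right)\neq\left(2,2\right)$ dovetails with the fact that for $(2,2)$ theorem \ref{Th3} would instead require equality counting multiplicity, which this construction need not furnish.

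With this inclusion established, theorem \ref{Th3} applies directly and forces both $f$ and $g$ to be constant; in particular $f$ is constant, and then $g=h\cdot(f')^{\ell}\equiv0$ is constant as well, which is the assertion. I expect the argument to be essentially structural once \eqref{Eq6} is rewritten, so no serious obstacle should arise. The only points that will need genuine care are confirming that $g$ is bona fide meromorphic (so that theorem \ref{Th3} is legitimately applicable) and checking the inclusion at the zeros of $f'$, where one might worry about interaction with the zeros of $h$; but since $h$ is entire and the vanishing of $g'$ there is forced by the single factor $(f')^{\ell-1}$ alone, this causes no difficulty.
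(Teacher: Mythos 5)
Your proposal is correct and follows exactly the paper's (one-line) argument: set $g:=h\,(f')^{\ell}$, note $g'=(f')^{\ell-1}(h'f'+\ell\hspace{0.2mm}h\hspace{0.2mm}f'')$ so that $\ell\geq2$ forces $\mathcal{Z}(f')\subseteq\mathcal{Z}(g')$ ignoring multiplicity, and invoke the latter case of theorem \ref{Th3}. The verification at the zeros of $f'$ is carried out carefully and the role of the exclusion $\left(m,n\right)\neq\left(2,2\right)$ is correctly identified.
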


The proof is standard verifying the latter case in theorem \ref{Th3} with $g:=h\hspace{0.1mm}(f')^{\ell}$.
On the other hand, note that $f=\frac{1-e^{2z}}{1+e^{2z}}$ satisfies \eqref{Eq6} with $h=\frac{1+e^{2z}}{2\hspace{0.1mm}e^z}$ for $\ell=1$ and $m=n=2$.

\vskip 2pt
Finally, we discuss some variants of \eqref{Eq1} as the following functional equations
\begin{equation}\label{Eq7}
f^2-2\hspace{0.2mm}\rho fg+g^2=1 \hspace{2mm} \mathrm{and} \hspace{2mm} f^3-3\hspace{0.2mm}\tau fg+g^3=1
\end{equation}
that are of independent interests.
In particular, the latter case relates to the classical work by Dixon \cite{Di}; see also the paper of Saleeby \cite{Sa} on meromorphic solutions to PDEs.

\begin{theorem}\label{Th4}
Let $\rho^2\neq1$ and $\tau^3\neq-1$.
Meromorphic solutions $f$ and $g$ over $\C$ to the functional equations in \eqref{Eq7} are constant if and only if \\
{\bf(A)} Quadratic case: $\mathcal{Z}(f)=\mathcal{Z}(g)$ ignoring multiplicity or $\mathcal{Z}(f')=\mathcal{Z}(g')$ counting multiplicity, \\
{\bf(B)} Cubic case: $\mathcal{Z}(f)=\mathcal{Z}(g)$ or $\mathcal{Z}(f')=\mathcal{Z}(g')$ both ignoring multiplicity.
\end{theorem}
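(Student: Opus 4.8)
The plan is to absorb the cross term in each equation of \eqref{Eq7} into a product of linear forms (legitimate since $\rho^2\neq1$ and $\tau^3\neq-1$ keep these factorizations non-degenerate), and then transcribe each hypothesis into information about omitted values or about an auxiliary function of small growth, in the spirit of the proofs of Theorems \ref{Th1}--\ref{Th3}. The converse directions are immediate: if $f,g$ are constant then $f'=g'\equiv0$, so $\mathcal{Z}(f')=\mathcal{Z}(g')$ holds, and it suffices to establish that each listed condition forces constancy.

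For the quadratic equation I would factor $t^2-2\rho t+1=(t-\lambda)(t-\lambda^{-1})$ with $\lambda+\lambda^{-1}=2\rho$; since $\rho^2\neq1$ one has $\lambda\neq0,\pm1$. Writing $P:=f-\lambda g$, the equation reads $P(f-\lambda^{-1}g)=1$, so $f-\lambda^{-1}g=P^{-1}$ and hence
\[
g=\frac{P-P^{-1}}{\lambda^{-1}-\lambda},\qquad f=\frac{\lambda^{-1}P-\lambda P^{-1}}{\lambda^{-1}-\lambda}.
\]
Thus $g$ vanishes exactly when $P^2=1$ and $f$ vanishes exactly when $P^2=\lambda^2$, while $f'=\frac{P'(\lambda^{-1}P^2+\lambda)}{(\lambda^{-1}-\lambda)P^2}$ and $g'=\frac{P'(P^2+1)}{(\lambda^{-1}-\lambda)P^2}$ share the factor $\frac{P'}{(\lambda^{-1}-\lambda)P^2}$. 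If $\mathcal{Z}(f)=\mathcal{Z}(g)$, a zero of $f$ forces $g^2=1\neq0$ (and symmetrically), so both zero sets are empty and $P$ omits $\{1,-1,\lambda,-\lambda\}$. If instead $\mathcal{Z}(f')=\mathcal{Z}(g')$ counting multiplicity, then at a point where $P=i$ (resp. $P=i\lambda$) the factor $P^2+1$ (resp. $\lambda^{-1}P^2+\lambda$) vanishes while the other stays nonzero (here $\lambda\neq\pm1$ guarantees $\lambda-\lambda^{-1}\neq0$), so $f'$ and $g'$ vanish to orders differing by exactly the order of that extra factor, contradicting the matching of multiplicities; hence $P$ omits $\{i,-i,i\lambda,-i\lambda\}$. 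In either case $P$ omits four distinct finite values, so little Picard theorem renders $P$, and therefore $f$ and $g$, constant.

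For the cubic equation I would use the Hesse identity $f^3+g^3+\tau^3-3\tau fg=L_1L_2L_3$ with $L_1=f+g+\tau$, $L_2=f+\omega g+\omega^2\tau$, $L_3=f+\omega^2 g+\omega\tau$ and $\omega=e^{2\pi i/3}$, so that \eqref{Eq7} becomes $L_1L_2L_3=1+\tau^3=:\kappa\neq0$. I would record the relations $L_1+L_2+L_3=3f$, $L_1+\omega^2L_2+\omega L_3=3g$, $L_1+\omega L_2+\omega^2L_3=3\tau$, together with $L_1L_2+L_2L_3+L_3L_1=3(f^2-\tau g)$ and $f^3-1=-g(g^2-3\tau f)$, $g^3-1=-f(f^2-3\tau g)$. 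Under $\mathcal{Z}(f)=\mathcal{Z}(g)$ a zero of $f$ forces $g^3=1\neq0$, so both zero sets are empty; when $f,g$ are moreover entire they are non-vanishing, whence $f=e^u,g=e^v$ and Borel's theorem applied to $e^{3u}+e^{3v}-3\tau e^{u+v}-1=0$ forces one of $3u,3v,u+v,u-v,2u-v,2v-u$ to be constant, each alternative collapsing \eqref{Eq7} to a polynomial identity with constant coefficients in $f$ or in $g$ and forcing constancy. The decisive estimate here is $m\!\left(r,(f')^3+(g')^3\right)=S(r)$, obtained from $\tfrac{1}{\kappa}\bigl((f')^3+(g')^3\bigr)=\prod_i\tfrac{L_i'}{L_i}$ and the logarithmic derivative lemma.

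The hard part will be the cubic case. Since the underlying curve has genus one there is no single-variable parametrization as above, so Nevanlinna theory is unavoidable, and the cross term $3\tau fg$ destroys the clean vanishing orders used in Theorem \ref{Th3}: at a zero of $g$ the factor $f-\mu_j$ now vanishes only to the same order rather than to triple it, so the transplant of the auxiliary function $H_0$ is no longer entire and must be rebuilt around the forms $L_i$. Concretely, I expect the obstruction to be ruling out poles, since $\mathcal{Z}(f')=\mathcal{Z}(g')$ does not by itself annihilate the zeros of $f,g$: at a pole of common order $s$ exactly one $L_i$ acquires a zero of order $2s$ while the other two acquire poles of order $s$, and one must combine this local picture with $m\!\left(r,(f')^3+(g')^3\right)=S(r)$ and its analogue for $L_1L_2+L_2L_3+L_3L_1$ to force $N(r,f)=S(r)$, thereby reducing to the entire case and closing both sub-cases by the auxiliary-function machinery adapted from the proof of Theorem \ref{Th3}.
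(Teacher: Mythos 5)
Your treatment of the quadratic equation is correct and is essentially the paper's own argument: the paper likewise factors the form as $(f+\rho_1 g)(f+\rho_2 g)=1$ with $\rho_1\rho_2=1$, solves for $f,g$ as rational functions of the auxiliary function $h$ (your $P$), and reads off that each hypothesis forces $h$ to omit the four values $\pm1,\pm\lambda$ (respectively $\pm\mathrm{i},\pm\mathrm{i}\lambda$), whence Picard applies. That half needs no further comment.

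The cubic case, however, is where the theorem actually lives, and there your proposal has a genuine gap rather than a proof. You explicitly defer the decisive step --- ruling out poles so as to reduce to entire $f,g$ --- to an unexecuted plan: the estimate $m\bigl(r,(f')^3+(g')^3\bigr)=S(r)$ coming from $\prod_i L_i'/L_i$ is correct but by itself says nothing about $N(r,f)$, and you give no mechanism that converts the local picture at a common pole (one $L_i$ with a zero of order $2s$, two with poles of order $s$) into $N(r,f)=S(r)$. Moreover the sub-case $\mathcal{Z}(f')=\mathcal{Z}(g')$ ignoring multiplicity is not addressed at all beyond an appeal to ``the auxiliary-function machinery adapted from Theorem \ref{Th3}''; that machinery was built on the vanishing-order bookkeeping of the pure Fermat equation, and, as you yourself note, the cross term $3\tau fg$ destroys exactly that bookkeeping. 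Your parenthetical remark that ``there is no single-variable parametrization'' points at the missing idea: the paper does not fight the genus-one obstruction with Nevanlinna theory but instead invokes the complete elliptic parametrization of all meromorphic solutions (Dixon, Saleeby), writing $f,g$ explicitly in terms of $\wp(\beta)$ and $\wp'(\beta)$ for an entire $\beta$, with $\wp$ attached to a cubic whose discriminant is $-5038848\,(\tau^3+1)^3\neq0$. Each hypothesis then forces a certain nonconstant elliptic function ($H_1$ or $H_2$ in the paper) to omit the value $0$ away from $\mathcal{Z}(\beta')$, which is impossible since a nonconstant elliptic function has no Picard value. Without either that parametrization or a completed pole-elimination argument, your cubic case does not close.
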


\begin{proof}
For the quadratic case, one simply follows Li \cite[remark 1.2]{Li5} and observes $f+\rho_1g=h$ and $f+\rho_2g=h^{-1}$ for a meromorphic function $h$, so that it leads to
\begin{equation}
f=\frac{h^2-\frac{\rho_1}{\rho_2}}{\Bigl(1-\frac{\rho_1}{\rho_2}\Bigr)h} \hspace{2mm} \mathrm{and} \hspace{2mm} g=\frac{h^2-1}{(\rho_1-\rho_2)h},\nonumber
\end{equation}
where $\rho_1=\rho+\sqrt{\rho^2-1}$ and $\rho_2=\rho-\sqrt{\rho^2-1}$.
Since we assume that $\mathcal{Z}(f)=\mathcal{Z}(g)$ ignoring multiplicity, $h$ omits $4$ values $\pm\sqrt{\frac{\rho_1}{\rho_2}},\pm1$ and must be constant.
Besides, one has
\begin{equation}
f'=\frac{h'\Bigl(h^2+\frac{\rho_1}{\rho_2}\Bigr)}{\Bigl(1-\frac{\rho_1}{\rho_2}\Bigr)h^2} \hspace{2mm} \mathrm{and} \hspace{2mm} g'=\frac{h'(h^2+1)}{(\rho_1-\rho_2)h^2},\nonumber
\end{equation}
so that, as $\mathcal{Z}(f')=\mathcal{Z}(g')$ counting multiplicity, $h$ omits $4$ values $\pm\hspace{0.2mm}\mathrm{i}\sqrt{\frac{\rho_1}{\rho_2}}$ and $\pm\hspace{0.2mm}\mathrm{i}$.

\vskip 2pt
For the cubic case, recall \cite[page 559]{Sa}, for some entire function $\beta$ over $\C$,
\begin{equation}\label{Eq8}
f=\frac{-3\hspace{0.2mm}\tau\sqrt[3]{4}\hspace{0.2mm}\wp(\beta)+36+9\hspace{0.2mm}\tau^3+\wp'(\beta)}{6\hspace{0.2mm}\{\sqrt[3]{4}\hspace{0.2mm}\wp(\beta)+9\hspace{0.2mm}\tau^2\}}
\hspace{2mm} \mathrm{and} \hspace{2mm}
g=\frac{-3\hspace{0.2mm}\tau\sqrt[3]{4}\hspace{0.2mm}\wp(\beta)+36+9\hspace{0.2mm}\tau^3-\wp'(\beta)}{6\hspace{0.2mm}\{\sqrt[3]{4}\hspace{0.2mm}\wp(\beta)+9\hspace{0.2mm}\tau^2\}}
\end{equation}
are the only solutions to \eqref{Eq7}; see also our remark for proposition \ref{Pr1}.
Here, we assume
\begin{equation}\label{Eq9}
(\wp')^2\equiv4\hspace{0.2mm}\wp^3+27\hspace{0.2mm}\tau\sqrt[3]{4}\hspace{0.2mm}(8-\tau^3)\wp+54\hspace{0.2mm}(\tau^6+20\hspace{0.2mm}\tau^3-8).
\end{equation}
Notice that the {\sl modular discriminant} $\Delta$ of \eqref{Eq9},
\begin{equation}
\begin{aligned}
\Delta(\tau):=&\,\{-\hspace{0.2mm}27\hspace{0.2mm}\tau\sqrt[3]{4}\hspace{0.2mm}(8-\tau^3)\}^3-27\hspace{0.2mm}\{54\hspace{0.2mm}(\tau^6+20\hspace{0.2mm}\tau^3-8)\}^2 \\
=&-5038848\hspace{0.2mm}(\tau^9+3\hspace{0.2mm}\tau^6+3\hspace{0.2mm}\tau^3+1)=-\hspace{0.2mm}5038848\hspace{0.2mm}(\tau^3+1)^3,\nonumber
\end{aligned}
\end{equation}
as a function of $\tau$ vanishes only when $\tau^3=-1$.

\vskip 2pt
When we assume $\mathcal{Z}(f)=\mathcal{Z}(g)$ ignoring multiplicity, then $\tau\sqrt[3]{4}\hspace{0.2mm}\wp-12-3\hspace{0.2mm}\tau^3=0$ and $\wp'=0$ simultaneously, so that $4\hspace{0.2mm}\wp^3+27\hspace{0.2mm}\tau\sqrt[3]{4}\hspace{0.2mm}(8-\tau^3)\wp+54\hspace{0.2mm}(\tau^6+20\hspace{0.2mm}\tau^3-8)=0$ seeing \eqref{Eq9}.
If $\tau=0$, a contradiction follows immediately.
Otherwise, it leads to $\tau^6+2\hspace{0.2mm}\tau^3+1=0$ through a routine computation, which contradicts against our hypothesis.
Therefore,
\begin{equation}
\begin{aligned}
H_1:=&\,4\hspace{0.2mm}\wp^3(\beta)+27\hspace{0.2mm}\tau\sqrt[3]{4}\hspace{0.2mm}(8-\tau^3)\wp(\beta)+54\hspace{0.2mm}(\tau^6+20\hspace{0.2mm}\tau^3-8)\\
&-9\hspace{0.2mm}\{-\hspace{0.2mm}\tau\sqrt[3]{4}\hspace{0.2mm}\wp(\beta)+12+3\hspace{0.2mm}\tau^3\}^2=O\bigl(\wp^3(\beta)\bigr)\nonumber
\end{aligned}
\end{equation}
as an elliptic function never vanishes, which is impossible unless $\beta$ is constant.

\vskip 2pt
On the other hand, it is straightforward to derive that
\begin{equation}
\begin{aligned}
f'=\frac{-36\sqrt[3]{4}\hspace{0.2mm}(\tau^3+1)\wp'(\beta)-\sqrt[3]{4}\hspace{0.2mm}\{\wp'(\beta)\}^2+\{\sqrt[3]{4}\hspace{0.2mm}\wp(\beta)+9\hspace{0.2mm}\tau^2\}\wp''(\beta)}
{6\hspace{0.2mm}\{\sqrt[3]{4}\hspace{0.2mm}\wp(\beta)+9\hspace{0.2mm}\tau^2\}^2}\cdot\beta',\\
g'=\frac{-36\sqrt[3]{4}\hspace{0.2mm}(\tau^3+1)\wp'(\beta)+\sqrt[3]{4}\hspace{0.2mm}\{\wp'(\beta)\}^2-\{\sqrt[3]{4}\hspace{0.2mm}\wp(\beta)+9\hspace{0.2mm}\tau^2\}\wp''(\beta)}
{6\hspace{0.2mm}\{\sqrt[3]{4}\hspace{0.2mm}\wp(\beta)+9\hspace{0.2mm}\tau^2\}^2}\cdot\beta'.\nonumber
\end{aligned}
\end{equation}
Suppose there is a zero of $f'$ that is not a zero of $\beta'$.
Since we assume $\mathcal{Z}(f')=\mathcal{Z}(g')$ ignoring multiplicity, it induces $\{\sqrt[3]{4}\hspace{0.2mm}\wp+9\hspace{0.2mm}\tau^2\}\wp''=0$ and $\wp'=0$ simultaneously, so that using \eqref{Eq9} again $4\hspace{0.2mm}\wp^3+27\hspace{0.2mm}\tau\sqrt[3]{4}\hspace{0.2mm}(8-\tau^3)\wp+54\hspace{0.2mm}(\tau^6+20\hspace{0.2mm}\tau^3-8)=0$.
If $\wp=-\frac{9\hspace{0.2mm}\tau^2}{\sqrt[3]{4}}$, one has $\tau^6+2\hspace{0.2mm}\tau^3+1=0$ via a routine calculation; a contradiction.
If $\wp''=0$, one has $\wp^2=-\frac{27}{12}\hspace{0.2mm}\tau\sqrt[3]{4}\hspace{0.2mm}(8-\tau^3)$ since
\begin{equation}\label{Eq10}
\wp''\equiv6\hspace{0.2mm}\wp^2+\frac{27}{2}\hspace{0.2mm}\tau\sqrt[3]{4}\hspace{0.2mm}(8-\tau^3).
\end{equation}
As $\{4\hspace{0.2mm}\wp^3+27\hspace{0.2mm}\tau\sqrt[3]{4}\hspace{0.2mm}(8-\tau^3)\wp\}^2=\{-\hspace{0.2mm}54\hspace{0.2mm}(\tau^6+20\hspace{0.2mm}\tau^3-8)\}^2$, it yields $\tau^9+3\hspace{0.2mm}\tau^6+3\hspace{0.2mm}\tau^3+1=0$ from a direct calculation that leads to a contradiction.
Thus, by symmetry, \eqref{Eq9} and \eqref{Eq10},
\begin{equation}
\begin{aligned}
H_2:=&\,\bigl\{\sqrt[3]{4}\hspace{0.2mm}\{\wp'(\beta)\}^2-\{\sqrt[3]{4}\hspace{0.2mm}\wp(\beta)+9\hspace{0.2mm}\tau^2\}\wp''(\beta)\bigr\}^2
-\{36\sqrt[3]{4}\hspace{0.2mm}(\tau^3+1)\wp'(\beta)\}^2\\
=&\,\bigl\{2\sqrt[3]{4}\hspace{0.2mm}\wp^3(\beta)+O\bigl(\wp^2(\beta)\bigr)\bigr\}^2+O\bigl(\wp^3(\beta)\bigr)=O\bigl(\wp^6(\beta)\bigr)\nonumber
\end{aligned}
\end{equation}
can only vanish at the zeros of $\beta'$, which won't happen unless $\beta$ is constant.

\vskip 2pt
It is worthwhile to mention our preceding analyses used the fact that every elliptic function having no pole must be constant and every non-constant elliptic function has no Picard value; see Koecher and Krieg \cite{KK}.
As $\wp(\beta)$ has poles of multiplicity $2\hspace{0.2mm}\ell$ (for non-constant $\beta$), $H_1,H_2$ will admit all finite values $a\in\C$ $6\hspace{0.2mm}\ell,12\hspace{0.2mm}\ell$ times respectively in each of $\wp$'s fundamental domains (parallelograms) with $1$ possible exception - the possible finite Picard value of $\beta$.
This explains the contradiction on $H_1$; from a classical result of Clunie \cite{Cl}, we recognize $T(r,\beta)=S(r,\wp(\beta))$ that explains the contradiction on $H_2$ because $N\bigl(r,\frac{1}{\wp(\beta)-a}\bigr)=\Omega\left(T(r,\wp(\beta))\right)$.
\end{proof}

%%%%%%%%%%%%%%%%%%%%%%%%%%%%%%%%%%%%%%%%%%%%%%%%%%%%%%%%%%%%%%%%%%%%%%%%%%%%%%%%%%%%%%%%%%%%%%%%%%%%%%%%%%%%%%%%%%%%%%%%%%%%%%%%%%%%%%%%%%%%%%%%%%%%%%%%%%%%%%%%

%%%%%%%%%%%%%%%%%%%%%%%%%%%%%%%%%%%%%%%%%%%%%%%%%%%%%%%%%%%%%%%%%%%%%%%%%%%%%%%%%%%%%%%%%%%%%%%%%%%%%%%%%%%%%%%%%%%%%%%%%%%%%%%%%%%%%%%%%%%%%%%%%%%%%%%%%%%%%%%%

%%%%%%%%%%%%%%%%%%%%%%%%%%%%%%%%%%%%%%%%%%%%%%%%%%%%%%%%%%%%%%%%%%%%%%%%%%%%%%%%%%%%%%%%%%%%%%%%%%%%%%%%%%%%%%%%%%%%%%%%%%%%%%%%%%%%%%%%%%%%%%%%%%%%%%%%%%%%%%%%

%%%%%%%%%%%%%%%%%%%%%%%%%%%%%%%%%%%%%%%%%%%%%%%%%%%%%%%%%%%%%%%%%%%%%%%%%%%%%%%%%%%%%%%%%%%%%%%%%%%%%%%%%%%%%%%%%%%%%%%%%%%%%%%%%%%%%%%%%%%%%%%%%%%%%%%%%%%%%%%%

\end{document}